\def\R{\mathbb R}
\def\la{\lambda}
\def\var{\varphi}
\def\na{\nabla}
\def\Om{\Omega}  % domains
\def\De{\Delta}      % Laplacian operators
\def\cal{\mathcal}
\def\L{\mathcal L}                                       % operator 
\def\wq{\infty}
\def\pa{\partial}
\def\rad{\text{\rm rad}}
\newcommand{\D}{{\rm d}}%  integral sign d 
\numberwithin{equation}{section}
\newtheorem{theorem}{Theorem}[section]
\newtheorem{proposition}[theorem]{Proposition}
\theoremstyle{definition}
\begin{document}
\title[on Kirchhoff equations]{Nondegeneracy of positive solutions to a Kirchhoff problem with critical Sobolev growth} 

     \author[G. Li and C.-L. Xiang]{Gongbao Li and Chang-Lin Xiang}       

\address[Gongbao Li]{School of Mathematics and Statistics and Hubei Key Laboratory of Mathematical Sciences,  Central China Normal University, Wuhan,  430079, P.R. China}
\email[]{ligb@mail.ccnu.edu.cn}
%\address[Shuangjie Peng]{School of Mathematics and Statistics and Hubei Key Laboratory of Mathematical Sciences, Central China Normal University,  Wuhan,  430079, P.R. China}
%\email[]{sjpeng@mail.ccnu.edu.cn}
\address[Chang-Lin Xiang]{School of Information and Mathematics, Yangtze University, Jingzhou 434023, P.R. China.}
\email[]{changlin.xiang@yangtzeu.edu.cn}
\thanks{Corresponding author: Chang-Lin Xiang}
\thanks{{\bf 2010 Mathematics Subject Classification:} 35A02, 35B09, 35J61}
\thanks{Li is supported by  NSFC (No. 11771166), and Program for Changjiang Scholars and Innovative Research Team in University \# IRT13066. Peng is financially supported by NSFC (No. 11571130). The corresponding author Xiang is supported by NSFC (No. 11701045) and   the Yangtze Youth Fund (No. 2016cqn56).}

\begin{abstract}
In this paper, we prove uniqueness and nondegeneracy of positive solutions to the following Kirchhoff equations with critical growth \begin{eqnarray*} -\left(a+b\int_{\mathbb{R}^{3}}|\nabla u|^{2}\right)\Delta u=u^{5}, & u>0 & \text{in }\mathbb{R}^{3},\end{eqnarray*} where $a,b>0$ are positive constants. This result has potential applications in singular perturbation problems concerning Kirchhoff equaitons.
\end{abstract}

\maketitle

{\small    
\keywords {\noindent {\bf Keywords:} Kirchhoff equations; Positive solutions; Uniqueness; Nondegeneracy}
\smallskip
\newline
%\subjclass{\noindent }
%\tableofcontents}
\bigskip

\section{Introduction and main result}

In this paper, we are concerned about the nonlocal Kirchhoff type
problem
\begin{eqnarray}
-\left(a+b\int_{\R^{3}}|\na u|^{2}\right)\De u=u^{5}, & u>0 & \text{in }\R^{3},\label{eq: limiting Kirchhoff}
\end{eqnarray}
where $a,b>0$ are constants, $\De=\sum_{i=1}^{3}\pa_{x_{i}x_{i}}$
is the usual Laplacian operator in $\R^{3}$.

Denote by $D=D^{1,2}(\R^{3})$ the completion of $C_{0}^{\wq}(\R^{3})$
under the seminorm 
\[
\|\var\|_{D}^{2}\equiv\int_{\R^{3}}|\na\var|^{2}.
\]
A (weak) solution to Eq. (\ref{eq: limiting Kirchhoff}) is a function
$u\in D$ satisfying 
\[
\left(a+b\int_{\R^{3}}|\na u|^{2}\right)\int_{\R^{3}}\na u\cdot\na\var=\int_{\R^{3}}u^{5}\var
\]
for all $\var\in D$. By the Sobolev embedding $D\subset L^{6}(\R^{3})$,
all the integrals in the above equation are well defined. 

Problem (\ref{eq: limiting Kirchhoff}) and its variants have been
studied extensively in the literature. Physician Kirchhoff \cite{Kirchhoff-1883}
proposed the following time dependent wave equation 
\[
\rho\frac{\pa^{2}u}{\pa t^{2}}-\left(\frac{P_{0}}{h}+\frac{E}{2L}\int_{0}^{L}\left|\frac{\pa u}{\pa x}\right|^{2}\right)\frac{\pa^{2}u}{\pa x^{2}}=0
\]
for the first time, in order to extend the classical D'Alembert's
wave equations for free vibration of elastic strings. %Bernstein
\cite{Bernstein-1940} and Pohozaev \cite{Pohozaev-1975} contributed
some early research on the study of Kirchhoff equations. Much attention
was received until the work \cite{Lions-1978} of J.L. Lions. For
more interesting results in this respect, we refer to e.g. \cite{Arosio-Panizzi-1996,DAncona-Spagnolo-1992}
and the references therein. From a mathematical point of view, the
interest of studying Kirchhoff equations comes from the nonlocality
of Kirchhoff type equations. For instance, the consideration of the
stationary analogue of Kirchhoff's wave equation leads to problem
of the type
\begin{eqnarray}
-\left(a+b\int_{\R^{3}}|\na u|^{2}\right)\De u=f(x,u) &  & \text{in }\Om\label{eq: general Kirchhoff eq.}
\end{eqnarray}
where $\Om\subset\R^{3}$ is a smooth domain. Note that the term $\left(\int|\na u|^{2}\D x\right)\De u$
depends not only on the pointwise value of $\De u$, but also on the
integral of $|\na u|^{2}$ over the whole domain. In this sense, Eqs.
(\ref{eq: limiting Kirchhoff}) and (\ref{eq: general Kirchhoff eq.})
are no longer the usual pointwise equalities. This new feature brings
new mathematical difficulties that make the study of Kirchhoff type
equations particularly interesting. We refer to e.g. \cite{Deng-Peng-Shuai-2015,Figueiredo et al-2014,Guo-2015,He-2016-JDE,He-Li-2015,He-Zou-2012,Li-Li-Shi-2012,LLPWX-2017,Li-Ye-2014,Perera-Zhang-2006,Wang-Tian-Xu-Zhang-2012}
and the references therein for mathematical researches on the existence
of solutions and many other problems. 

In this paper, we study the uniqueness and nondegeneracy of positive
solutions to problem (\ref{eq: limiting Kirchhoff}). These quantitative
properties play a fundamental role in singular perturbation problems.
Let us briefly recall some results in this respect. Kwong \cite{Kwong-1989}
established uniqueness and nondegeneracy of positive solutions to
the Schr\"odinger equations
\begin{eqnarray*}
-\De w+w=w^{q}, & w>0 & \text{in }\R^{N},
\end{eqnarray*}
see also Chang et al. \cite{Chang et al-2007}; For quasilinear Schr\"odinger
equations such as
\begin{eqnarray*}
-\Delta u-u\Delta|u|^{2}+\omega u-|u|^{q-1}u=0 &  & \text{in }\R^{N},
\end{eqnarray*}
where $\omega>0$ is a constant, $q$ is an index denoting subcritical
growth of the nonlinearity and $N\ge1$, see e.g. Selvitella \cite{Selvitella-2015},
Xiang \cite{Xiang-2016} and Adachi et al. \cite{Shinji-Masataka-Tatsuya-2016};
For fractional Schr\"odinger equations such as 
\begin{eqnarray*}
\left(-\De\right)^{s}w+w=w^{q}, & w>0 & \text{in }\R^{N},
\end{eqnarray*}
where $0<s<1\le N$ and $q$ is an index denoting subcritical growth
of the nonlinearity, see e.g. Frank and Lenzmann \cite{Frank-Lenzmann-2013}
and Frank, Lenzmann and Silvestre \cite{Frank-Lenzman-Silvestre-2016},
Fall and Valdinoci \cite{Fall-Valdinoci-2014}. For elliptic equations
with critical growth
\begin{eqnarray}
(-\De)^{s}w=w^{\frac{N+2s}{N-2s}}, & w>0 & \text{in }\R^{N},\label{eq: Yamabe problems}
\end{eqnarray}
see Ambrosetti et al. \cite{Ambrosetti-GA-Peral-1999} and D\'avila
et al. \cite{Davila-Pino-Sire-2013} for $s=1$ and $0<s<1$, respectively.
In particular, Ambrosetti and Malchiodi \cite{Ambrosetti-Malchiodi-Book}
provides a systematical research on nondegeneracy of ground states
to various types of elliptic problems together with applications in
singular perturbation problems. 

It is also known that the uniqueness and nondegeneracy of ground states
are of fundamental importance when one deals with orbital stability
or instability of ground states. It mainly removes the possibility
that directions of instability come from the kernel of the related
linear operator. The uniqueness and nondegeneracy of ground states
also play an important role in blow-up analysis for the corresponding
standing wave solutions in the corresponding time-dependent equations,
see e.g. \cite{Frank-Lenzmann-2013,Frank-Lenzman-Silvestre-2016}
and the references therein. 

For Kirchhoff problems, not much is known in this respect. Recently,
Li et al. \cite{LLPWX-2017} established uniqueness and nondegeneracy
for positive solutions to Kirchhoff equations with subcritical growth.
More precisely, they proved that the following Kirchhoff equation
\begin{eqnarray*}
-\left(a+b\int_{\R^{3}}|\na u|^{2}\right)\De u+u=u^{p}, & u>0 & \text{in }\R^{3},
\end{eqnarray*}
where $1<p<5$, has a unique nondegenerate positive radial solution.
As a counterpart to this result, we have the following theorem for
Kirchhoff equations with critical Sobolev growth.

\begin{theorem} \label{thm: uniqueness and nondegeneracy} For any
positive constants $a,b>0$, there exists a unique positive solution
$u\in D$ to equation (\ref{eq: limiting Kirchhoff}) up to scalings
and translations. Moreover, $u$ is nondegenerate in the sense that
\[
\operatorname{Ker}\L_{+}={\rm span}\left\{ u_{x_{1}},u_{x_{2}},u_{x_{3}},u/2+x\cdot\na u\right\} ,
\]
where $\L_{+}:D\to D$ is defined as 
\begin{equation}
\L_{+}\var=-\left(a+b\int_{\R^{3}}|\na u|^{2}\right)\De\var-2b\left(\int_{\R^{3}}\na u\cdot\na\var\right)\De u-5u^{4}\var\label{eq: linear operator}
\end{equation}
for all $\var\in D$.\end{theorem}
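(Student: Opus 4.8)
The plan is to exploit the elementary but decisive point that, for any \emph{fixed} solution $u$ of \eqref{eq: limiting Kirchhoff}, the quantity $A:=a+b\int_{\R^3}|\na u|^2$ is simply a positive constant, so that $u$ solves the purely local critical equation $-A\De u=u^5$ in $\R^3$. Writing $u=A^{1/4}w$ one checks $-\De w=w^5$, and since $w\in D$ is a positive finite-energy solution, the classification of entire solutions of the critical equation (Caffarelli--Gidas--Spruck, together with the Sobolev inequality) gives that $w$ is a Talenti bubble $W_{\la,\xi}$. Hence $\|u\|_D^2=A^{1/2}S_0$ with $S_0:=\|W_{\la,\xi}\|_D^2$ independent of $(\la,\xi)$, and the defining relation $A=a+bS_0A^{1/2}$ forces $A^{1/2}$ to be the unique positive root of $t^2-bS_0t-a=0$. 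Thus $A$ depends only on $a,b$, every positive solution has the form $A^{1/4}W_{\la,\xi}$, and since the rescaling $u\mapsto\mu^{1/2}u(\mu\,\cdot)$ and the translations both preserve \eqref{eq: limiting Kirchhoff} and act transitively on this family, uniqueness up to scalings and translations follows.

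For the nondegeneracy statement, fix a representative solution $U$ and set $A=a+b\|U\|_D^2$. The inclusion $\supseteq$ is a symmetry computation: differentiating $-A\De U=U^5$ in $x_i$ and using $\int_{\R^3}\na U\cdot\na U_{x_i}=\tfrac12\pa_{x_i}\|U\|_D^2=0$ gives $\L_+U_{x_i}=0$; and since $u_\mu:=\mu^{1/2}U(\mu\,\cdot)$ solves \eqref{eq: limiting Kirchhoff} for every $\mu>0$, while $\L_+$ is precisely the Fr\'echet derivative at $U$ of the map $u\mapsto-(a+b\|u\|_D^2)\De u-u^5$, differentiating at $\mu=1$ gives $\L_+\bigl(U/2+x\cdot\na U\bigr)=0$. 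All boundary terms in these and in the integrations by parts below vanish owing to the decay $U,\,x\cdot\na U=O(|x|^{-1})$ and $\na U=O(|x|^{-2})$ at infinity.

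The heart of the proof is the reverse inclusion. Let $\var\in\operatorname{Ker}\L_+$ and set $c:=2b\int_{\R^3}\na U\cdot\na\var$, a constant. Using $\De U=-U^5/A$, the equation $\L_+\var=0$ becomes the \emph{local} identity $L_0\var=-\tfrac{c}{A}U^5$, where $L_0\psi:=-A\De\psi-5U^4\psi$. Since $U=A^{1/4}W_{\la,\xi}$ one has $L_0=A\bigl(-\De-5W_{\la,\xi}^4\bigr)$, so by the classical nondegeneracy of the Talenti bubble (see e.g.\ \cite{Ambrosetti-GA-Peral-1999} and the references therein) $\operatorname{Ker}L_0={\rm span}\{U_{x_1},U_{x_2},U_{x_3},U/2+x\cdot\na U\}$. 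The algebraic coincidence that defuses the nonlocal term is $L_0U=-A\De U-5U^5=U^5-5U^5=-4U^5$, so $\tfrac{c}{4A}U$ solves $L_0\psi=-\tfrac{c}{A}U^5$ and hence $\var=\tfrac{c}{4A}U+\var_h$ with $\var_h\in\operatorname{Ker}L_0$. Two integrations by parts (of Pohozaev/dilation type) give $\int_{\R^3}\na U\cdot\na U_{x_i}=0$ and $\int_{\R^3}\na U\cdot\na(U/2+x\cdot\na U)=0$, so $\int_{\R^3}\na U\cdot\na\var_h=0$ and therefore
\[
c=2b\int_{\R^3}\na U\cdot\na\var=2b\cdot\frac{c}{4A}\|U\|_D^2=\frac{b\|U\|_D^2}{2A}\,c.
\]
Because $A=a+b\|U\|_D^2$ with $a>0$, the factor $b\|U\|_D^2/(2A)$ is strictly less than $\tfrac12$, so $c=0$, i.e.\ $\var=\var_h\in{\rm span}\{U_{x_1},U_{x_2},U_{x_3},U/2+x\cdot\na U\}$. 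Together with $\supseteq$ this gives the asserted description of $\operatorname{Ker}\L_+$.

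I expect the main obstacle to be not any single estimate but this structural reduction: a priori the nonlocal term $-2b\bigl(\int_{\R^3}\na U\cdot\na\var\bigr)\De U$ could enlarge the kernel, and what rules this out is the simultaneous occurrence of three facts — $U$ itself solves $L_0\psi\propto U^5$, the gradient $\na U$ is $D$-orthogonal to $\operatorname{Ker}L_0$, and $a>0$ keeps the resulting scalar equation for $c$ nondegenerate. Establishing the orthogonality relies on the dilation identity $\int_{\R^3}x\cdot\na(U^6)=-3\int_{\R^3}U^6$ and its relatives, which is the one place where the decay of $U$ and $\na U$ must be used with care; everything else is bookkeeping built on top of the known theory of the pure critical equation.
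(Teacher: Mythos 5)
Your argument is correct, and the uniqueness part is essentially identical to the paper's: both fix the constant $A=a+b\|u\|_D^2$, reduce to the local critical equation, invoke the classification of the bubble, and solve the resulting scalar equation $A=a+b S_0 A^{1/2}$ for the unique positive root. For nondegeneracy the core mechanism is also the same — rewrite $\L_+\var=0$ as $L_0\var=-\tfrac{c}{A}U^5$ with $L_0\psi=-A\De\psi-5U^4\psi$, use the classical kernel of $L_0$, exploit $L_0U=-4U^5$ to produce the particular solution $\tfrac{c}{4A}U$, and kill $c$ via the orthogonality $\int_{\R^3}\na U\cdot\na\var_h=0$ together with the factor $b\|U\|_D^2/(2A)<\tfrac12$ — but you organize it differently. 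The paper first restricts to $D_{\rad}$, decomposes against $e_0=U/2+x\cdot\na U$ there (Proposition 2.1), and then dismisses the nonradial directions as ``standard,'' deferring to the spherical-harmonics machinery of Ambrosetti--Malchiodi. You instead run the decomposition $\var=\tfrac{c}{4A}U+\var_h$ directly in all of $D$, using the full four-dimensional kernel of $L_0$ at once; this buys a self-contained proof of the reverse inclusion with no radial reduction and no appeal to an external ``standard argument,'' at the modest price of needing the orthogonality $\int_{\R^3}\na U\cdot\na U_{x_i}=0$ and $\int_{\R^3}\na U\cdot\na e_0=0$ for every kernel element rather than only the radial one. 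Your version is, if anything, more complete than what is printed in the paper.
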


Remark that our result can also be viewed as an extension of the nondegeneracy
result of Ambrosetti \cite{Ambrosetti-GA-Peral-1999} for equations
(\ref{eq: Yamabe problems}) with $s=1$, since in the case $b=0$,
problem (\ref{eq: limiting Kirchhoff}) is reduced to (\ref{eq: Yamabe problems})
with $s=1$. 

Our notations are standard. For simplicity, we write by $\int u$
the integral $\int_{\R^{3}}u\D x$ unless otherwise stated. We also
write $u(x)=u(|x|)$ whenever $u$ is a radially symmetric function.

\section{Proof of Theorem \ref{thm: uniqueness and nondegeneracy}}

In this section we prove Theorem \ref{thm: uniqueness and nondegeneracy}.
Let 
\[
Q(x)=\frac{3^{1/4}}{(1+|x|^{2})^{1/2}}
\]
be the unique positive radial solution with $Q(0)=3^{1/4}$ that satisfies
\begin{eqnarray*}
-\De Q=Q^{5} &  & \text{in }\R^{3},
\end{eqnarray*}
 see e.g. Ambrosetti et al. \cite{Ambrosetti-GA-Peral-1999}. For
any $\la>0$ and $x_{0}\in\R^{3}$, it is direct to verify that the
functions $x\mapsto\la^{-1/2}Q((x-x_{0})/\la)$ are also solutions
to the above equation. 

\subsection{Proof of uniqueness}

\begin{proof}[Proof of uniqueness] Let $u\in D$ be an arbitrary
positive solution to Eq. (\ref{eq: limiting Kirchhoff}) and let 
\[
c=a+b\int|\na u|^{2}.
\]
Then the function $\bar{u}(x)=u(\sqrt{c}x)$ solves 
\begin{eqnarray*}
-\De\bar{u}=\bar{u}^{5} &  & \text{in }\R^{3}.
\end{eqnarray*}
Hence, the uniqueness of $Q$ implies that $\bar{u}(x)=\la^{-1/2}Q((x-x_{0})/\la)$
for some $x_{0}\in\R^{3}$ and some $\la>0$. That is, 
\[
u(x)=\la^{-1/2}Q\left(\left(\frac{x}{\sqrt{c}}-x_{0}\right)/\la\right)
\]
 This gives $\int|\na u|^{2}=\sqrt{c}\int|\na Q|^{2}.$ Hence 
\[
c=a+b\sqrt{c}\int|\na Q|^{2},
\]
which yields 
\[
\sqrt{c}=\frac{1}{2}\left(b\|\na Q\|_{2}^{2}+\sqrt{b^{2}\|\na Q\|_{2}^{4}+4a}\right).
\]
Hence, 
\begin{equation}
u(x)=\la^{-1/2}Q\left(\left(\frac{x}{b\|\na Q\|_{2}^{2}+\sqrt{b^{2}\|\na Q\|_{2}^{4}+4a}}-x_{0}\right)/\lambda\right)\label{eq: solution expression}
\end{equation}
for some $x_{0}\in\R^{3}$ and some $\la>0$. This proves that (\ref{eq: limiting Kirchhoff})
has a unique positive energy solution up to translations and scalings.\end{proof}

We point out that $c$ depends only on $a,b$ since $Q$ is explicitly
given. In other words, $c$ is independent of the choice of the positive
solution $u$. As a consequence, we conclude that all the positive
energy solutions to (\ref{eq: limiting Kirchhoff}) is given by 
\[
S=\left\{ \la^{-1/2}Q\left(\left(\frac{x}{b\|\na Q\|_{2}^{2}+\sqrt{b^{2}\|\na Q\|_{2}^{4}+4a}}-y\right)/\lambda\right):\la>0,y\in\R^{3}\right\} .
\]

\subsection{Proof of nondegeneracy}

With no loss of generality, we assume that $u(x)=u(|x|)$ is the unique
positive radial energy solution to Eq. (\ref{eq: limiting Kirchhoff})
with $\la=1$ in (\ref{eq: solution expression}). Still write $c=a+b\int|\na u|^{2}$.
Keep in mind that $c$ depends only on $a,b$. Define ${\cal A}:D\to D$
by
\[
{\cal A}\var=-c\De\var-5u^{4}\var.
\]
It is straightforward to derive from \cite[Chapter 5]{Ambrosetti-Malchiodi-Book}
that 
\begin{equation}
\operatorname{Ker}{\cal A}={\rm span}\left\{ u/2+x\cdot\na u,u_{x_{1}},u_{x_{2}},u_{x_{3}}\right\} .\label{eq: known nondegeneracy results}
\end{equation}
Note that $u/2+x\cdot\na u=u/2+ru^{\prime}(r)$ with $r=|x|$ is a
radial function in $D$. For simplicity, denote $D_{\rad}=\{v\in D:v(x)=v(|x|)\}$. 

To prove the nondenegeracy of $\L_{+}$, first we prove

\begin{proposition}\label{prop: radial nondengeracy}Let $\L_{+}\var=0$
and $\var\in D_{\rad}$. Then $\var=\la\left(u/2+x\cdot\na u\right)$
for some $\la\in\R$. \end{proposition}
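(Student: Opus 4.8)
The plan is to reduce the equation $\L_+\varphi=0$ for radial $\varphi$ to the classical linearized Yamabe operator $\mathcal{A}$, whose radial kernel is already known from \eqref{eq: known nondegeneracy results} to be spanned by $Z_0 := u/2 + x\cdot\na u$. Writing out $\L_+\varphi = 0$ explicitly, we have
\[
-c\,\De\varphi - 2b\Bigl(\int_{\R^3}\na u\cdot\na\varphi\Bigr)\De u - 5u^4\varphi = 0 ,
\]
i.e. $\mathcal{A}\varphi = 2b\bigl(\int\na u\cdot\na\varphi\bigr)\De u$. Since $-\De u = c^{-1}u^5$ (because $u$ solves \eqref{eq: limiting Kirchhoff} with $a+b\int|\na u|^2 = c$), the right-hand side is a known explicit multiple of $u^5$. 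Set $t := 2b\int_{\R^3}\na u\cdot\na\varphi \in \R$, so that $\mathcal{A}\varphi = -\,c^{-1}t\,u^5 =: t\,f$, with $f := -c^{-1}u^5 = \De u$.

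The key observation is that $\mathcal{A}$ has an explicit radial "particular solution" against the right-hand side $\De u$. Indeed, differentiating the family of solutions $\la^{-1/2}Q(x/\la)$ (equivalently, the rescaled family $u_\la(x) = u(x/\mu)$ appropriately normalized) in the dilation parameter produces $Z_0 = u/2 + x\cdot\na u \in \operatorname{Ker}\mathcal{A}$; but one can go one step further. The natural guess is that $\mathcal{A}$ applied to a suitable multiple of $u$ itself, or to $r\,u'(r)$-type expressions, yields a multiple of $u^5$. More precisely, I would look for an explicit $\psi \in D_{\rad}$ with $\mathcal{A}\psi = c^{-1}u^5$: a convenient candidate is $\psi = \tfrac14 u$, since $\mathcal{A}u = -c\De u - 5u^4\cdot u = u^5 - 5u^5 = -4u^5$, hence $\mathcal{A}\bigl(-\tfrac14 u\bigr) = u^5 = -c\De u$, so $\mathcal{A}\bigl(-\tfrac{c}{4}u\bigr) = -c^2\De u = c\,u^5$... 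I would calibrate constants so that $\mathcal{A}\psi_0 = f = \De u$ for an explicit $\psi_0$ proportional to $u$. Then $\varphi - t\psi_0 \in \operatorname{Ker}\mathcal{A}\cap D_{\rad} = \R Z_0$, so $\varphi = t\psi_0 + \alpha Z_0$ for some $\alpha\in\R$.

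It remains to pin down $t$. Substituting $\varphi = t\psi_0 + \alpha Z_0$ back into the definition $t = 2b\int\na u\cdot\na\varphi$ gives a linear scalar equation in $t$ and $\alpha$:
\[
t = 2b\,t\!\int\na u\cdot\na\psi_0 \;+\; 2b\,\alpha\!\int\na u\cdot\na Z_0 .
\]
The integral $\int\na u\cdot\na Z_0$ can be computed: since $-\De u = c^{-1}u^5$, integration by parts gives $\int\na u\cdot\na Z_0 = c^{-1}\int u^5 Z_0$, and by the Pohozaev-type identity (or direct differentiation of $\int u_\la^6$) this vanishes, $\int u^5 Z_0 = \frac{d}{d\la}\big|_{\la=1}\frac16\int u_\la^6 = 0$ because the $L^6$ norm is dilation-invariant. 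So the $\alpha$-term drops out, leaving $t\bigl(1 - 2b\int\na u\cdot\na\psi_0\bigr) = 0$. Since $\psi_0$ is an explicit positive multiple of $u$, $\int\na u\cdot\na\psi_0$ is a positive multiple of $\|\na u\|_2^2 = \|\na Q\|_2^2\sqrt c$, and one checks using the relation $c = a + b\sqrt c\,\|\na Q\|_2^2$ that $2b\int\na u\cdot\na\psi_0 \neq 1$ (the factor $a>0$ forces strict inequality). Hence $t=0$, so $\varphi = \alpha Z_0$, which is exactly the claim. The main obstacle is the bookkeeping in this last step: getting the explicit constant in $\psi_0$ right and verifying the nonvanishing of $1 - 2b\int\na u\cdot\na\psi_0$ using the defining quadratic relation for $c$; everything else is integration by parts plus the quoted radial nondegeneracy \eqref{eq: known nondegeneracy results}.
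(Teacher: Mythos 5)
Your proposal is correct and follows essentially the same route as the paper: both reduce $\L_{+}\var=0$ to $\mathcal{A}\var=t\,\De u$ with $t=2b\int\na u\cdot\na\var$, use $\mathcal{A}u=-4u^{5}$ to absorb the right-hand side into an explicit multiple of $u$ (your particular solution $\psi_{0}$, which calibrates to $\psi_{0}=\frac{1}{4c}u$, plays exactly the role of the paper's component $\tilde{\var}$ in the orthogonal decomposition $\var=\la e_{0}+\tilde{\var}$), and then force $t=0$ from the scalar self-consistency equation because $2b\int\na u\cdot\na\psi_{0}=b\|\na u\|_{2}^{2}/(2c)<1/2$, which holds since $c=a+b\|\na u\|_{2}^{2}>b\|\na u\|_{2}^{2}$. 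Your dilation-invariance argument for $\int\na u\cdot\na e_{0}=0$ is a correct justification of what the paper calls a direct computation.
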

\begin{proof}
Direct computation shows that $u/2+x\cdot\na u=u/2+ru^{\prime}(r)$
is indeed a radial solution to equation $\L_{+}\var=0$. We have to
prove that $u/2+x\cdot\na u$ is the unique radial solution to equation
$\L_{+}\var=0$ in $D_{\rad}$ up to a constant. 

Let $\var\in D_{\rad}$ satisfy $\L_{+}\var=0$. It is equivalent
to 
\[
{\cal A}\var=2b\left(\int\na u\cdot\na\var\right)\De u.
\]
Write $e_{0}=u/2+x\cdot\na u$ for simplicity. Since $D_{\rad}$ is
a Hilbert space, denote by $D_{0}$ the orthogonal complement of $\R e_{0}$
in $D_{\rad}$. Then $\var=\la e_{0}+\tilde{\var}$ for some $\la\in\R$
and $\tilde{\var}\in D_{0}$. By a direct computation, we find that
$\int\na u\cdot\na e_{0}=0$. This implies $u\in D_{0}$. Moreover,
note that (\ref{eq: known nondegeneracy results}) implies that ${\cal A}$
is invertible on $D_{0}$. It follows from ${\cal A}e_{0}=0$ and
$\int\na u\cdot\na e_{0}=0$ that $\tilde{\var}$ satisfies 
\[
{\cal A}\tilde{\var}=2b\left(\int\na u\cdot\na\tilde{\var}\right)\De u=-\frac{2b}{c}\left(\int\na u\cdot\na\tilde{\var}\right)u^{5}.
\]

Claim that $\int\na u\cdot\na\tilde{\var}=0$. To this end, first
note that ${\cal A}u=-4u^{5}$. Thus, the above identity implies 
\[
{\cal A}\left(\tilde{\var}-\frac{b}{2c}\left(\int\na u\cdot\na\tilde{\var}\right)u\right)=0.
\]
We know $u\in D_{0}$. Hence, (\ref{eq: known nondegeneracy results})
yields 
\[
\tilde{\var}-\frac{b}{2c}\left(\int\na u\cdot\na\tilde{\var}\right)u=0.
\]
 This yields
\[
\na\tilde{\var}=\frac{b}{2c}\left(\int\na u\cdot\na\tilde{\var}\right)\na u.
\]
It follows
\[
\int\na u\cdot\na\tilde{\var}=\frac{b}{2c}\left(\int\na u\cdot\na\tilde{\var}\right)\int|\na u|^{2}.
\]
Since $c>b\int|\na u|^{2}$, we have $b\int|\na u|^{2}/(2c)<1/2$.
We deduce from the above equation that $\int\na u\cdot\na\tilde{\var}=0$.
This proves the claim. 

Therefore, ${\cal A}\tilde{\var}=0$. Recall $\tilde{\var}\in D_{0}$.
Applying (\ref{eq: known nondegeneracy results}) gives $\tilde{\var}=0$.
Thus, we obtain $\var=\la e_{0}$. The proof is complete. 
\end{proof}
Now we can prove the nondegeneracy part of Theorem \ref{thm: uniqueness and nondegeneracy}. 

\begin{proof}[Proof of nondegeneracy] Since we have proved Proposition
\ref{prop: radial nondengeracy}, the rest proof for the nondegeneracy
of $\L_{+}$ is standard. We refer the readers to \cite[Chapter 5]{Ambrosetti-Malchiodi-Book}
for details. \end{proof}

\end{document}